\newtheorem{thm}{Theorem}
\newtheorem{defn}{Definition}
\newtheorem{corr}{Corollary}
\newtheorem{rem}{Remark}
\newtheorem{assum}{Assumption}
\newtheorem{prob}{Problem}
\newenvironment{customthm}[1]
  {\innercustomthm}
  {\endinnercustomthm}
\let\Gin@viewport@code\Gin@trim\expandafter\Gread@parse@vp#1 \\}
\newcolumntype{M}[1]{>{\centering\arraybackslash}m{#1}}
\begin{document}
\title{Fast, Convexified Stochastic Optimal Open-Loop Control For Linear Systems Using Empirical Characteristic Functions}

\author{Vignesh Sivaramakrishnan and Meeko M. K. Oishi
\thanks{
    This material is based upon work supported by the National Science Foundation 
    under NSF Grant Number IIS-1528047 and CNS-1836900,
    and by the Air Force Research Lab under Grant No. FA9453-18-2-0022. 
    Any opinions, findings, and conclusions or recommendations expressed in this
    material are those of the authors and do not necessarily reflect the views
    of the National Science Foundation. \newline\indent 
    V. Sivaramakrishnan and M. Oishi are with Electrical \& Computer Engineering, University of New Mexico, Albuquerque, New Mexico. \newline\indent
    Email: {\tt\{vigsiv,oishi\}@unm.edu}.
}}
\date{}
\maketitle

\begin{abstract}
We consider the problem of stochastic optimal control in the presence of an unknown disturbance. 
We characterize the disturbance via empirical characteristic functions, and employ a chance constrained approach. 
By exploiting properties of characteristic functions and underapproximating cumulative distribution functions, we can reformulate a nonconvex problem by a conic, convex under-approximation.  
This results in extremely fast solutions that are assured to maintain probabilistic constraints. 
We construct algorithms to solve an optimal open-loop control problem
and demonstrate our approach on two examples.  
\end{abstract}

\section{Introduction}

Stochastic optimal control typically presumes accurate models of the underlying dynamics and stochastic processes \cite{mesbah2016stochastic,bertsekas2004stochastic,stengel1994optimal}.
However, in many circumstances, accurate characterization of uncertainty is difficult.  Further, inaccurate characterization of stochastic processes may have unexpected impacts \cite{nilim2005robust,samuelson2017data}, as optimal control actions are typically dependent upon the first and second moments of the stochastic processes \cite{stengel1994optimal}.  
Such inaccuracies could be particularly problematic when the unknown stochastic processes is asymmetric, multimodal, or heavy-tailed.  
For example, in hypersonic vehicles, excessive turbulence makes aerodynamic processes difficult to model accurately, and their fast time-scale means that erroneous control actions 
could result in catastrophic failure.  

We consider the case in which the dynamics are known, but the noise process is not known, and
focus on the problem of data-driven stochastic optimal control in a chance constrained setting, in which probabilistic constraints must be satisfied with at least a desired likelihood.
Some approaches, such as distributional stochastic optimal control, seek robustness to ill-defined distributions with finite samples \cite{samuelson2017data,yang_convex_2017}. 
Other approaches construct piecewise-affine over-approximations of value functions by  solving a chance-constrained problem~\cite{darivianakis2017data}. 
Researchers have also employed kernel density estimation \cite{calfa_data-driven_2015,caillau_solving_2018} to approximate individual chance constraints in nonlinear optimization problems.  

One tool to characterize uncertainty through observed data is the {\em empirical characteristic function} \cite{yu_empirical_2004}, which is often employed in economics and statistics to characterize models where maximum-likelihood estimation can struggle.  
The empirical characteristic function generates an approximation of the true characteristic function, and has known convergence properties \cite{csorgo_limit_1981,feuerverger_empirical_1977}.  
The advantage of this approach is that it enables direct, closed-form approximation of the cumulative distribution function and the moments of the underlying stochastic process \cite{yu_empirical_2004}, both of which are typically necessary for stochastic optimal control problems.  
However, the main challenge then becomes one of finding computationally efficient under-approximations of the resulting cumulative distribution function, which may be non-convex.  

We propose to employ empirical characteristic functions to characterize unknown disturbance processes in a linear, time-invariant dynamical system with a quadratic cost function.  
We construct a conic, convex reformulation of the resulting stochastic optimal control problem, that ensures computational tractability \cite{blackmore2010minimum}.  
Our approach employs a piecewise under-approximation of the approximate cumulative distribution function, with a user-specified trade-off between accuracy and the number of piecewise elements.  
We use confidence intervals on the approximate cumulative distribution function to provide probabilistic bounds on the solution to the data-driven stochastic optimal control problem.  
 \textit{The main contribution of this paper is the construction of a convex, conic reformulation of a stochastic optimal control problem in the presence of an unknown, additive disturbance, via empirical characteristic functions, with confidence bounds on the optimal solution.}
 
The outline of the paper is as follows. 
We first formulate the problem in Section~\ref{sec:prelimform}.  
Section~\ref{sec:method} presents algorithms to convexify the problem and proofs of its convergence properties. 
In Section~\ref{sec:numerical}, we demonstrate our approach on two examples. 

\section{Preliminaries and Problem formulation}\label{sec:prelimform}
We use the following notation throughout the paper.  
We denote real-valued vectors with lowercase $w \in \mathbb R^n$, matrices with upper case $V \in \mathbb R^{n \times m}$, and random variables via boldface $\mathbf{w}$.  Concatenated vectors or matrices are indicated by an overline, 
$    \overline{\mathbf{w}} = \left[\mathbf{w}[0]^{\top}\,\mathbf{w}[1]^{\top}\,\cdots\,\mathbf{w}[N-1]^{\top}\right]^{\top}\in\mathbb{R}^{pN}
$.  
We denote intervals using $\mathbb{N}_{[a,b]}$ where $a,b\in\mathbb{N},\; a<b$.

Consider the linear time-invariant dynamical system 
\begin{equation} 
    \mathbf{x}[k+1] = A\mathbf{x}[k] + B u[k] + G \mathbf{w}[k]
\end{equation}
with state $\mathbf{x} \in \mathbb R^n$, controlled input $u \in \mathbb R^m$, disturbance input $\mathbf{w} \in \mathbb R^p$, matrices $A, B, G$ of the appropriate dimensions, and timestep $k \in [0, N]$. 
Given a deterministic initial condition $x_0$, we rewrite the dynamics in concatenated form 
\begin{equation}\label{eq:concatdyn}
    \overline{\mathbf{x}} = \overline{A}x_0+\overline{B}\overline{u}+\overline{G}\overline{\mathbf{w}}
\end{equation}
with state $\overline{\mathbf{x}}\in\mathbb{R}^{n(N+1)}$, input $\overline{u}\in\mathcal{U}^{N} = [u_{\mathrm{min}}, u_{\mathrm{max}}]^{N}\subset\mathbb{R}^{mN}$, 
disturbance $\overline{\mathbf{w}}\in\mathbb{R}^{pN}$, and matrices $\overline A, \overline B, \overline G$,
as in \cite{cinquemani_convexity_2011,vitus_stochastic_2016}.

We presume $\mathbf{\overline{w}}$ is a stationary, independent stochastic process, that is the concatenation of a sequence of samples, $\left\{\mathbf{w}_j\right\}_{j=1}^{N_s}$, drawn from the probability space $\Omega$.  
The probability space is defined by $(\Omega,\mathscr{B}(\mathbb{R}^{pN}),\mathbb{P}_{\mathbf{\overline{w}}})$ with $\mathbb{P}_{\mathbf{\overline{w}}}$ as the induced probability distribution of $\mathbb{P}$ \cite[Prop. 2.1]{eaton1983multivariate}. 


\begin{prob}
Solve the optimization problem
\begin{subequations}
\label{prob:stoc}
\begin{align}
     & \underset{\overline{u}}{\mathrm{min}} &\;
     \mathbb{E}\Bigl[\left({\overline{\mathbf{x}}} -\overline{x}_d \right)^\top Q \left(\overline{\mathbf{x}} -\overline{x}_d \right) + \overline{u}^\top R & \overline u \Bigr]\label{eq:gencost}\\
     & \mathrm{s.t.} &  \mathbb{P}\left\{\overline{\mathbf{x}}\notin \mathcal{S}\right\} \leq&\  \Delta\label{eq:gensconstr}\\
     & & \overline{u}\in&\ \mathcal{U}^{N}\label{eq:genhardinput}
\end{align}
\end{subequations}
subject to the dynamics in (\ref{eq:concatdyn}), for a desired trajectory $\overline{x}_d\in\mathbb{R}^{n(N+1)}$,
positive definite matrices  $Q\in\mathbb{R}^{n(N+1)\times n(N+1)}$ and $R\in\mathbb{R}^{mN\times mN}$, polytopic constraint set $\mathcal S \subseteq \mathbb R^{n(N+1)}$ that is closed and bounded, and constraint violation threshold $\Delta\in [0,1]$, {\em without} direct knowledge of the cumulative distribution function or moments of $\mathbf{w}$, but {\em with} observations of $N_s$ samples $\left\{\mathbf{w}_j\right\}_{j=1}^{N_s}$.  
\end{prob}

The standard approach to solving (\ref{prob:stoc}) when the disturbance process is well characterized is to tighten the joint chance constraint (\ref{eq:gensconstr}) via individual chance constraints \cite{cinquemani_convexity_2011,vitus_stochastic_2016}.  
However, 
two main challenges then arise: 1) reliance of (\ref{eq:gencost}) and (\ref{eq:gensconstr}) upon moments and the cumulative distribution function, respectively, of the unknown noise process, and 2) non-convexity of the individual chance constraints.   
The former can be seen from expanding (\ref{eq:gencost}),
\begin{align}
    &\mathbb{E}\Bigl[\left({\overline{\mathbf{x}}} -\overline{x}_d \right)^\top Q \left(\overline{\mathbf{x}} -\overline{x}_d \right) + \overline{u}^\top R \overline u \Bigr] =\nonumber\\
    (&\mathbb{E}[\overline{\mathbf{x}}] - \overline{x}_d)^\top Q{(\mathbb{E}[\overline{\mathbf{x}}] -
    \overline{x}_d)}+\overline{u}^\top R\overline{u}+ \mathrm{tr}(Q\overline{G}\text{diag}(C_{\overline{\mathbf{w}}})\overline{G}^{\top})
\end{align}
with $\mathbb{E}[\overline{\mathbf{x}}]= \overline{A}x_0+\overline{B}\overline{u}+\overline{G}\mathbb{E}[\overline{\mathbf{w}}]$, $C_{\overline{\mathbf{w}}} = \mathbb{E}[\overline{\mathbf{w}}^2] - (\mathbb{E}[\overline{\mathbf{w}}])^2$.

Characteristic functions provide a means to obtain moments as well as the cumulative distribution function. 

\begin{defn}
The characteristic function of a random vector $\mathbf{w}\in\mathbb{R}^{p}$ is 
\begin{equation}
\label{eq:CF}
    \upvarphi_{\mathbf{w}}(t) = \mathbb{E}[\exp{(it^{\top}\mathbf{w})}]= 
    \int_{\mathbb{R}^p}~\exp{(it^{\top}\mathbf{w})}~d\Phi_{\mathbf{w}}(x)
\end{equation}%
which is the Riemann–Stieltjes integral of $\exp{(it^{\top}\mathbf{w})}$ over the frequency variable $t\in\mathbb{R}^{p}$ with respect to the cumulative distribution function, $\Phi_{\mathbf{w}}(x)$.
\end{defn}%

Since we have no direct knowledge of 
$\mathbf{w}$, the empirical characteristic function can be used to compute the cumulative distribution function and moments from samples of $\textbf{w}$.

\begin{defn}[Empirical Characteristic Function
{\cite{feuerverger_empirical_1977,yu_empirical_2004}}] Let $\left\{\mathbf{w}_j\right\}_{j=1}^{N_s}$ be the sequence of $N_s$ observations of the random vector, $\mathbf{w}$. The empirical characteristic function is 
\begin{subequations}
    \label{eq:ecf}
\begin{align}
\useshortskip
    \hat\upvarphi_{\mathbf{w}}(t) &=  \sum^{N_s}_{j=1}\alpha_j (\mathbf{w}) K_{\mathbf{w}_j}(t) \\
    K_{\mathbf{w}_j}(t) &= \exp{(it^{\top}\mathbf{w}_j)}\exp{\left(-\textstyle{ \frac{1}{2}}(t^{\top}\Sigma t)\right)}\label{eq:ecfsmoothing}
\end{align}
\end{subequations}
for some smoothing parameter matrix $\Sigma\in\mathbb{R}^{p\times p}$ and weighting
function $\alpha_j (\mathbf{w})>0$, with $\sum_{j=1}^{N_s}\alpha_j (\mathbf{w})=1$.
\label{def:ecf}
\end{defn}

A variety of approaches can be used to find a suitable $\Sigma$, to avoid over-smoothing and under-smoothing \cite{silverman_density_1998}.
The smoothing in \eqref{eq:ecfsmoothing}  is important for ensuring continuity in the cumulative distribution function \cite[Eq. 1.2.1]{lukacs_characteristic_1970} approximated via Theorem \ref{thm:GPI} from the empirical characteristic function.

\begin{thm}[Gil-Pileaz Inversion Theorem,\cite{gil-pelaez_note_1951}]
\label{thm:GPI}
The cumulative distribution function of a random variable $\boldsymbol{y}\in\mathbb{R}$ can be written in terms of the characteristic function as 
\begin{equation}
            \label{eq:cftocdf}
         \Phi_{\mathbf{y}}(x) = \frac{1}{2}-\frac{1}{\pi}\int^{\infty}_{0}\mathrm{Im}\left(\frac{\exp{(-it_{\mathbf{y}}x)}~\upvarphi_{\mathbf{y}}(t_{\mathbf{y}})}{t_{\mathbf{y}}}\right)~dt_{\mathbf{y}}
\end{equation}
where $t_{\mathbf{y}}\in\mathbb{R}$ and $\upvarphi_{\mathbf{y}}(t_{\mathbf{y}})$ is the characteristic function of the random variable $\mathbf{y}$. 
\end{thm}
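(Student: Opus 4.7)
The plan is to reduce the statement to a classical computation involving the Dirichlet integral, after a careful interchange of integration and expectation. The starting observation is that, since $\varphi_{\mathbf{y}}(t) = \mathbb{E}[\exp(it\mathbf{y})]$, the integrand can be rewritten as
\begin{equation*}
\mathrm{Im}\!\left(\frac{\exp(-itx)\,\varphi_{\mathbf{y}}(t)}{t}\right) = \frac{1}{t}\,\mathbb{E}\bigl[\sin\bigl(t(\mathbf{y}-x)\bigr)\bigr],
\end{equation*}
where I have used that $\mathrm{Im}(\mathbb{E}[\exp(it(\mathbf{y}-x))]) = \mathbb{E}[\sin(t(\mathbf{y}-x))]$ by linearity of expectation. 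This casts the improper integral of interest in a form that no longer explicitly involves the complex exponential.

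Next, I would introduce the truncated integral $I_T(x) = \int_0^T t^{-1}\mathbb{E}[\sin(t(\mathbf{y}-x))]\,dt$ in order to legitimately apply Fubini's theorem: on $[0,T]$ the integrand $t^{-1}\sin(t(\mathbf{y}-x))$ is bounded (the removable singularity at $t=0$ is resolved by $\lim_{t\to 0}\sin(ta)/t = a$) and jointly measurable, so the swap
\begin{equation*}
I_T(x) = \mathbb{E}\!\left[\int_0^T \frac{\sin(t(\mathbf{y}-x))}{t}\,dt\right]
\end{equation*}
is justified. I then recall the Dirichlet integral: for any $a\in\mathbb{R}$, $\int_0^\infty t^{-1}\sin(ta)\,dt = \tfrac{\pi}{2}\mathrm{sgn}(a)$, with partial integrals $\int_0^T t^{-1}\sin(ta)\,dt$ uniformly bounded in both $T$ and $a$. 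This uniform bound is the key ingredient that lets me push $T\to\infty$ inside the expectation by dominated convergence.

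Taking the limit yields
\begin{equation*}
\lim_{T\to\infty} I_T(x) = \mathbb{E}\!\left[\tfrac{\pi}{2}\mathrm{sgn}(\mathbf{y}-x)\right] = \tfrac{\pi}{2}\bigl[\Prob(\mathbf{y}>x) - \Prob(\mathbf{y}<x)\bigr] = \tfrac{\pi}{2}\bigl(1-2\Phi_{\mathbf{y}}(x)\bigr),
\end{equation*}
where the last equality uses $\Prob(\mathbf{y}>x) = 1-\Phi_{\mathbf{y}}(x)$ together with continuity of $\Phi_{\mathbf{y}}$ at $x$, so that $\Prob(\mathbf{y}=x)=0$ (this continuity is precisely what the Gaussian smoothing kernel in \eqref{eq:ecfsmoothing} guarantees in the empirical setting). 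Dividing by $\pi$ and rearranging produces the stated identity.

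The main obstacle is the interchange of expectation and integration, since $t^{-1}\sin(t(\mathbf{y}-x))$ is \emph{not} absolutely integrable over $(0,\infty)$, so Fubini does not apply directly to the improper integral. The truncation argument above, combined with the uniform-in-$T$ bound on the Dirichlet partial sums and dominated convergence, is the standard device used to bypass this difficulty; the remaining steps are essentially bookkeeping.
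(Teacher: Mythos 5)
The paper does not actually prove this theorem: it is imported verbatim from Gil-Pelaez's 1951 note and used as a black box, so there is no internal proof to compare against. Your argument is the standard classical proof of the inversion formula, and it is essentially sound: rewriting the imaginary part as $t^{-1}\mathbb{E}[\sin(t(\mathbf{y}-x))]$, invoking the Dirichlet integral $\int_0^\infty t^{-1}\sin(ta)\,dt=\tfrac{\pi}{2}\operatorname{sgn}(a)$ with its uniformly bounded partial integrals, and passing to the limit by dominated convergence is exactly how this result is established in the literature. You are also right to flag that the identity requires $x$ to be a continuity point of $\Phi_{\mathbf{y}}$ (at an atom the formula returns $\tfrac12(\Phi_{\mathbf{y}}(x)+\Phi_{\mathbf{y}}(x^-))$), and tying that to the Gaussian smoothing in \eqref{eq:ecfsmoothing} correctly explains why the hypothesis is harmless in the paper's setting.

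One step deserves tightening. To justify Fubini on $[0,T]\times\Omega$ you assert the integrand is ``bounded'' after removing the singularity at $t=0$; but the bound $|t^{-1}\sin(t(\mathbf{y}-x))|\le|\mathbf{y}-x|$ is a bound in $t$ for each fixed sample point, not a joint bound, and absolute integrability over the product space is not automatic without a moment (or at least a logarithmic-moment) assumption on $\mathbf{y}$. The standard repair is to truncate at both ends: on $[\epsilon,T]$ the integrand is bounded by $1/\epsilon$, so Fubini applies trivially; the resulting inner integral $\int_{\epsilon}^{T}t^{-1}\sin(t(\mathbf{y}-x))\,dt$ is uniformly bounded in $\epsilon$, $T$, and $\mathbf{y}$ by the Dirichlet partial-integral bound you already cite, and dominated convergence then lets you send $\epsilon\to 0$ and $T\to\infty$ inside the expectation. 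With that adjustment the proof is complete and assumption-free, matching the generality of the cited theorem.
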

The integral in \eqref{eq:cftocdf} is assured to converge, 
since it is a convex combination of characteristic functions  (Definition~\ref{def:ecf}), 
which exist for any random vector
\cite[Thm. 2.1.3]{lukacs_characteristic_1970}.

\begin{defn}
The $d^{\mathrm{th}}$ moment of $\mathbf{w}$ can be written as 
\begin{equation}\label{eq:momentdesired}
    \mathbb{E}[\mathbf{w}^d] = (-i)^{d} \left[\frac{\partial^d\upvarphi_{\mathbf{w}}(t)}{\partial t_1^d}\cdots\frac{\partial^d\upvarphi_{\mathbf{w}}(t)}{\partial t_p^d}\right]^{\top}_{\substack{t=0}}
\end{equation}
\end{defn}

Hence to solve Problem 1, we first solve the following. 
\begin{customthm}{1.a}\label{ps:ecf}
    Using the empirical characteristic function, 1) construct a concave under-approximation of the approximate cumulative distribution function $\hat\Phi_{\mathbf{w}}(x)$, and 2) approximate the first two moments of $\mathbf{\overline{w}}$.
\end{customthm}
\begin{customthm}{1.b}\label{ps:optimal_hp}
    Reformulate (3) into a convex, conic stochastic optimal control problem, so that feasible solutions of the convex program are feasible solutions of (3).
\end{customthm}

\section{Method}\label{sec:method}

We first transform (\ref{eq:gensconstr}) into a series of individual chance constraints, each with a risk $\delta_i$. 
We represent the set $\mathcal{S}$ as $\mathcal{S} = \{\overline{\mathbf{x}}\in\mathbb{R}^{n(N+1)}:P\overline{\mathbf{x}}\leq q\}$ for some $P \in\mathbb{R}^{l\times n(N+1)}$, $q\in\mathbb{R}^l$. 
Denoting the $i^{\mathrm{th}}$ constraint as $p_i^{\top} \overline{\mathbf{x}} \leq q_i$, we obtain
\begin{subequations}
\begin{align}
    \mathbb{P}\left\{p_i^{\top}G\overline{\mathbf{w}}\leq q_i-p_i^{\top}(\overline{A}x_0+\overline{B}\overline{u})\right\} &\geq 1 - \delta_i\label{eq:ccfinal}\\
    \Leftrightarrow
    \Phi_{p_i^{\top}G\overline{\mathbf{w}}}(q_i-p_i^{\top}(\overline{A}x_0+\overline{B}\overline{u})) &\geq 1 - \delta_i \label{eq:ICCcdf}\\
    \sum_{i=1}^{l} \delta_i \leq \Delta,\; \delta_i \geq 0,\; \Delta\in[0,1],\;\forall i&\in\ \mathbb{N}_{[1,l]}\label{eq:riskalloc}
\end{align}
\end{subequations}
for $p_i\in\mathbb{R}^{n(N+1)}$, $q_i \in \mathbb R$, $\delta_i \in [0,1] \subseteq \mathbb R$ with $\overline{\delta}\in\mathbb{R}^l$. 

Then solutions of the optimization problem 
\begin{subequations}
\label{prob:stoc_ecf_cr}
\begin{alignat}{3}
  \min_{\overline{u},\overline{\delta}} &\hspace{2.5em} \mathbb{E}\left[\left(\overline{{\mathbf{x}}} -\overline{x}_d \right)^\top  Q \left({\overline{\mathbf{x}}} -\overline{x}_d \right) + \overline{u}^\top  R \overline u \right]\label{eq:stoc_ecf_cr_cost}\\
  \mathrm{s.t.} 
  &
      \raisebox{-.6\baselineskip}[0pt][0pt]{\hspace{-1.3em}\footnotesize{$\forall i \in \mathbb{N}_{[1,l]}$}\hspace{0.4em}}
    \raisebox{-.5\baselineskip}[0pt][0pt]{\hspace{-0.5em}$\left\{\kern-\nulldelimiterspace\begin{array}{ @{} c } \mathstrut \\ \mathstrut \\ \mathstrut \end{array}\right.$}
  \hspace{-0.3em}\Phi_{p_i^{\top}\overline{G}\overline{\mathbf{w}}}(q_i-p_i^{\top}(\overline{A}x_0+\overline{B}\overline{u})) \geq\  1 - \delta_i \label{eq:stoc_ecf_cr_ICC} \\
  &\hspace{7.2em} q_i-p_i^{\top}(\overline{A}x_0+\overline{B}\overline{u}) \geq\ x^{lb}_{i}\label{eq:stoc_ecf_cr_ICC_res}\\
  &\hspace{6em} \sum_{i=1}^{l} \delta_i \leq \Delta,\; \delta_i \geq 0,\; \Delta\in\ [0,1]\label{eq:stoc_ecf_cr_ra}\\
  & \hspace{15em} \overline{u}\in\ \mathcal{U}^{N}\label{eq:stoc_ecf_cr_u}
\end{alignat}
\end{subequations}
are also feasible solutions of (\ref{prob:stoc}).
This is because the joint chance constraint \eqref{eq:gensconstr} is enforced by \eqref{eq:stoc_ecf_cr_ICC} and \eqref{eq:stoc_ecf_cr_ra} with the additional constraint \eqref{eq:stoc_ecf_cr_ICC_res}, which 
restricts the domain of the $i^{\mathrm{th}}$ chance constraint by some lower bound $x_i^{lb}$. 

However, several difficulties arise.  
Note that (\ref{prob:stoc_ecf_cr}) is non-convex due to \eqref{eq:stoc_ecf_cr_ICC}.  
The constraint (\ref{eq:stoc_ecf_cr_ICC_res}) ensures a restriction to the concave region of $\Phi_{p_i^{\top}\overline{G}\mathbf{\overline{w}}}(x)$. For unimodal distributions, the inflection point, $x_i^{lb}$, occurs about the mode\cite[Def. 1.1]{dharmadhikari1988unimodality}, but for arbitrary distributions, this may not be true. 

In addition, (\ref{eq:stoc_ecf_cr_cost}) is dependent upon the first two moments of $\overline{\mathbf{w}}$ and (\ref{eq:stoc_ecf_cr_ICC}) is dependent upon the cumulative distribution function of $p_i^{\top}\overline{G}\overline{\mathbf{w}},\ \forall i \in \mathbb{N}_{[1,l]}$.  
Hence we seek empirical characteristic functions to approximate the cumulative distribution function and moments based on samples $\mathbf{w}_j$. 
In addition, we also seek a method to reformulate (\ref{eq:stoc_ecf_cr_ICC}) using its approximation from the empirical characteristic function with a concave restriction \eqref{eq:stoc_ecf_cr_ICC_res} by finding $x_i^{lb}$ to solve a convex problem.

\subsection{Approximating the cumulative distribution function and moments from the empirical characteristic function}

Applying Definition~\ref{def:ecf}, we obtain
\begin{subequations}
\begin{align}
    \hat\upvarphi_{p_i^{\top}\overline{G}\overline{\mathbf{w}}}(t) =&\ \sum^{N_s}_{j=1}\alpha_j(\mathbf{\overline{w}})\exp{(itp_i^{\top}\overline{G}\overline{\mathbf{w}}_j)} \cdot & \nonumber\\
    & \hspace{1.2em} \exp{\left(-\tfrac{1}{2}((p_i^{\top}\overline{G})\overline{\Sigma}(p_i^{\top}\overline{G})^{\top} t^2)\right)}&\label{eq:cdfecfwkern}\\
        \hat\upvarphi_{\overline{\mathbf{w}}}(t) =&\ \sum^{N_s}_{j=1}\alpha_j(\mathbf{\overline{w}})\exp{(i\overline{t}^{\top}\overline{\mathbf{w}}_j)}\exp{\left(-\tfrac{1}{2}(\overline{t}^{\top}\overline{\Sigma}\overline{t})\right)}
        \label{eq:momentecfwkern}
\end{align}%
\end{subequations}
where $\overline{\Sigma} = \text{diag}([\Sigma_0\cdots\Sigma_{N}])\in\mathbb{R}^{pN\times pN}$ , $\overline t = [t_0\ \cdots t_{N}]^{\top}\in\mathbb{R}^{pN}$ and $\alpha_j(\mathbf{\overline{w}}) = 1/N_s$. 
To approximate $\Phi_{p_i^{\top}\overline{G}\overline{\mathbf{w}}}$ in \eqref{eq:stoc_ecf_cr_ICC}, we use \eqref{eq:cftocdf} 
to obtain $\hat\Phi_{p_i^{\top}\overline{G}\overline{\mathbf{w}}}$.
For the moments in the cost \eqref{eq:stoc_ecf_cr_cost}, we use \eqref{eq:momentdesired} to obtain the approximate moments of $\overline{\mathbf{w}}$.

\subsection{Constructing a Convex Restriction for~\eqref{eq:stoc_ecf_cr_ICC}}
\begin{figure*}[t!]
    \centering
    \includegraphics[width=\textwidth]{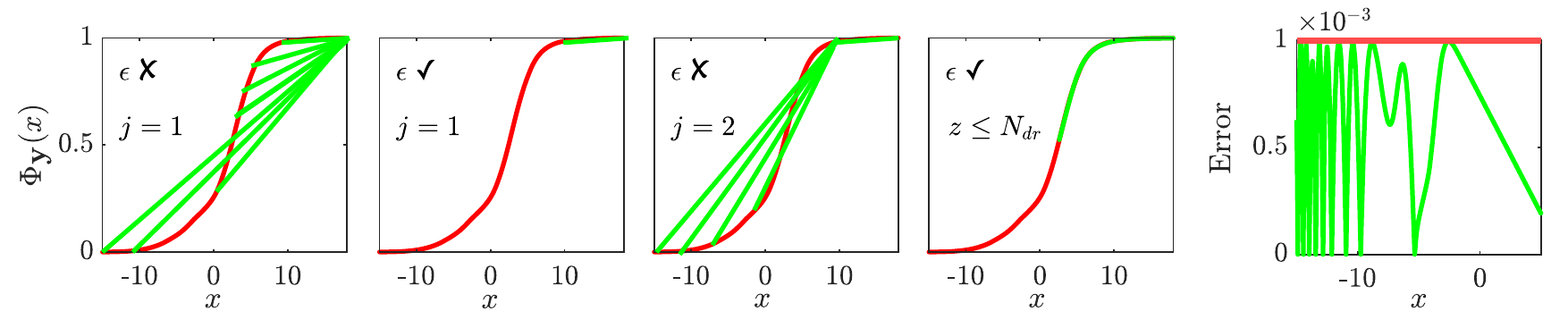}
    \caption{(Left to Right) Algorithm~\ref{algo:PWAunder} under-approximates the cumulative distribution function, $\hat\Phi_{\mathbf{y}}(x)$ (red), with $\hat\Phi^l_{\mathbf{y}}(x)$ (green), for some user-defined error, $\epsilon$.  
    We use 1000 samples of $\mathbf{y}=\mathbf{f}\mathbf{y}_1+(1-\mathbf{f})\mathbf{y}_2$, with Bernoulli random variable $\mathbf{f}$, $\mathbf{y}_1$ a Gaussian $\mathcal{N}(0,0.2)$, and $\mathbf{y}_2$ a Weibull distribution $\text{Weib}(k = 4,\theta = 2)$. 
    The error $\hat\Phi_{\mathbf{y}}(x)-\hat\Phi^l_{\mathbf{y}}(x) \leq \epsilon$ is depicted on the far right.}
    \label{fig:PWAunder}
\end{figure*}

We seek  a conic representation of~\eqref{eq:stoc_ecf_cr_ICC} with a restriction for which it is concave \cite[Def 1.1]{dharmadhikari1988unimodality}.
For a user-defined error, $\epsilon$, and desired number of affine terms, $N_{dr}$, we construct a piecewise affine under-approximation\cite[Sec. Sec. 4.3.1]{boyd2004convex}, 

\begin{align}\label{eq:PWACDF}
    \hat\Phi^l_{p_i^{\top}\overline{G}\overline{\mathbf{w}}} &= \min_{r\in \mathbb{N}_{[1,z^i]}}\{a_{i,r}x+c_{i,r}\}
\end{align}
such that 
\begin{equation}\label{eq:approxineq}
    0\leq \hat\Phi_{p_i^{\top}\overline{G}\overline{\mathbf{w}}}(x)
    -\hat\Phi^l_{p_i^{\top}\overline{G}\overline{\mathbf{w}}}(x) \leq \epsilon 
\end{equation}
is assured 
over the domain $\mathcal{D}_i = [x^{lb}_i,\infty]$.  
We define $a_{i,r}$ and $c_{i,r}$ as the slope and intercept for the $r^{\mathrm{th}}$ affine term. 

We propose Algorithm 1 to construct the piecewise linear under-approximation of the cumulative distribution function, with a concave restriction $x^{lb}$, derived from the empirical characteristic function. 

\begin{algorithm}
  \caption{Computing $\hat\Phi^l_{\mathbf{w}}$ from $\hat\Phi_{\mathbf{w}}$}
	\label{algo:PWAunder}
	Evaluations of cumulative distribution function $\{(x_p,\hat\Phi_{\mathbf{w}}(x_p)\}_{p=1}^{N_p}$, desired error $\epsilon$, desired number of affine terms $N_{dr}$.
	\\
	\textbf{Output}:
	affine terms of $\hat\Phi^l_{\mathbf{w}}$, $\{(a_{j},c_{j})\}^{z}_{j=1}$, restriction $x^{lb}$
	\begin{algorithmic}[1]
	\State continue $\leftarrow$ true, $p\leftarrow N_p$ 
		\While{continue = true} $\forall j\in\mathbb{N}_{[1,p-1]},\; \forall k\in\mathbb{N}_{[j,p]}$
		\State $a_j\leftarrow\frac{\hat\Phi_{\mathbf{w}}(x_{p})-\hat\Phi_{\mathbf{w}}(x_j)}{x_{p}-x_j}$
		\State $c_j\leftarrow \hat\Phi_{\mathbf{w}}(x_{p})-m_jx_{j}$
		\State $y_{j,k}\leftarrow a_jx_{k} + c_j$
		\State $error_{j,k}$ $\leftarrow \hat\Phi_{\mathbf{w}}(x_{k})-y_{j,k}$
		\State $w\leftarrow$ Smallest $j$ such that $\underset{j}{\max}\{error_{j,k}\}<\epsilon$ and $error_{j,k}>0$
		\If{$w = \emptyset$ or  $z>N_{dr}$ or $||w-p|| = 1$}
		\State continue $\leftarrow$ false
		\State $(a_{j},c_{j})\}^{z}_{j=1}\leftarrow$$\{(0,\Phi_{\mathbf{w}}(x_{N_p}))\}\bigcup\mathcal{F}$, $x^{lb}\leftarrow x_j$
		\Else, $\mathcal{F}\leftarrow\{(a_j,c_j)\}_{j=w}$, $p = w$
		\EndIf
		\EndWhile
  \end{algorithmic}
\end{algorithm}

Algorithm~\ref{algo:PWAunder} is based on the sandwich algorithm \cite{rote1992convergence}, and is demonstrated in Figure \ref{fig:PWAunder}.  At each of $N_p$ evaluation points, $\{(x_p,\hat\Phi_{\mathbf{w}}(x_p)\}_{p=1}^{N_p}$, the algorithm constructs affine terms, and  
stores the affine terms which result in largest positive error close to $\epsilon$.  This is repeated until the break conditions are met (line 8) with a total of $z$ piecewise affine terms.  
We choose an upper bound $\Phi_{\mathbf{w}}(x_{N_p})$ (line 10),
as it is unreasonable to infer the probability of an event beyond
$\underset{j\in\mathbb{N}_{[1,N_s]}}{\max}{(p_i^{\top}\overline{G}\overline{\mathbf{w}}_j)}$, and it assures \eqref{eq:approxineq} holds on $\mathcal{D}_i$.  
This solves Problem~\ref{ps:ecf}.

\subsection{Underapproximative, Conic Optimization Problem}

We replace the individual chance constraints in \eqref{eq:stoc_ecf_cr_ICC} and the lower bounds in \eqref{eq:stoc_ecf_cr_ICC_res} with a conic, convex reformulation, obtained from Algorithm~\ref{algo:PWAunder}, 
resulting in the following.

\begin{subequations}
\label{prob:stoc_ecf_cr_convex}
\begin{alignat}{3}
  \min_{\overline{u},\overline{\delta}} &\hspace{2.5em} \mathbb{E}\left[\left(\overline{{\mathbf{x}}} -\overline{x}_d \right)^\top  Q \left({\overline{\mathbf{x}}} -\overline{x}_d \right) + \overline{u}^\top  R \overline u \right]\label{eq:stoc_ecf_cr_convex_cost}\\
  \mathrm{s.t.} &
      \raisebox{-1\baselineskip}[0pt][0pt]{\hspace{-2.1em}\footnotesize{$\begin{array}{ @{} c }
      \hspace{-0.15em}\footnotesize\forall i \in \mathbb{N}_{[1,l]}\\ \hspace{0.4em}\footnotesize\forall r\in \mathbb{N}_{[1,z^i]}\end{array}$}\hspace{1.7em}}
    \raisebox{-.5\baselineskip}[0pt][0pt]{\hspace{-2.3em}$\left\{\kern-\nulldelimiterspace\begin{array}{ @{} c } \mathstrut \\ \mathstrut \\ \mathstrut \end{array}\right.$}
  \hspace{-0.7em}a_{i,r}(q_i-p_i^{\top}(\overline{A}x_0+\overline{B}\overline{u})) + c_{i,r}\geq 1 - \delta_i \label{eq:stoc_ecf_cr_convex_ICC} \\
  &\hspace{7.5em} q_i-p_i^{\top}(\overline{A}x_0+\overline{B}\overline{u}) \geq\ x^{lb}_{i}\label{eq:stoc_ecf_cr_convex_ICC_res}\\
  &\hspace{6.2em} \sum_{i=1}^{l} \delta_i \leq \Delta,\; \delta_i \geq 0,\ \Delta\in [0,1]\label{eq:stoc_ecf_cr_convex_ra}\\
  & \hspace{15.4em} \overline{u}\in\mathcal{U}^{N}\label{eq:stoc_ecf_cr_convex_u}
\end{alignat}
\end{subequations}

The optimization problem in \eqref{prob:stoc_ecf_cr_convex} can be posed as a second-order cone program \cite[Sec. 4.4]{boyd2004convex}. 
Algorithm~\ref{algo:solvingConvex}
summarizes how the methods described in this section 
solve \eqref{prob:stoc_ecf_cr_convex}.

\begin{algorithm}[h]
  \caption{Underapproximative, conic optimization \eqref{prob:stoc_ecf_cr_convex}}
	\label{algo:solvingConvex}
	Time horizon N, $\Delta T$, polytopic set $\{P,q\}$, samples $\{ \mathbf{w}_j\}_{j=1}^{N_s}$, \eqref{eq:cdfecfwkern}, \eqref{eq:momentecfwkern}, evaluation points $N_p$, desired error $\epsilon$, desired number of affine terms $N_{dr}$, smoothing matrix $\overline{\Sigma}$.
	\\
	\textbf{Output}:
	Open loop input  $\overline{u}$, risk allocation $\overline{\delta}$
	\begin{algorithmic}[1]
        \For{$i\in\mathbb{N}_{[1,l]}$}
        \State Let $\mathcal{D} = [\underset{ j\in\mathbb{N}_{[1,N_s]}}{\min}{(p_i^{\top}\overline{G}\overline{\mathbf{w}}_j)},\underset{ j\in\mathbb{N}_{[1,N_s]}}{\max}{(p_i^{\top}\overline{G}\overline{\mathbf{w}}_j)}]$
        \State $\{(x_p,\Phi_{p_i^{\top}\overline{G}\overline{\mathbf{w}}}(x_p)\}_{p=1}^{N_p}\leftarrow$ Using \eqref{eq:cftocdf} and \eqref{eq:cdfecfwkern}.
        \State $\{(a_{i,r},c_{i,r})\}^{z^i}_{r=1}$ and $x^{lb}_i\leftarrow$ From Algorithm
        \ref{algo:PWAunder}
        \State Let $\mathcal{D}_i \leftarrow [x^{lb}_i,\infty]$
        \EndFor
        \State $\mathbb{E}[\overline{\mathbf{w}}]$, $\mathbb{E}[\overline{\mathbf{w}}^2]\leftarrow$ Using \eqref{eq:momentdesired} and~\eqref{eq:momentecfwkern}.
        \State $C_{\overline{w}}\leftarrow\mathbb{E}[\overline{\mathbf{w}}^2]-(\mathbb{E}[\overline{\mathbf{w}}])^2$
        \State $\{\overline{u},\overline{\delta}\}\leftarrow$ Solve~\eqref{prob:stoc_ecf_cr_convex}.
    \end{algorithmic}%
\end{algorithm}%

\subsection{Convergence and Confidence Intervals}
\begin{figure}
    \centering
    \includegraphics[width=\linewidth]{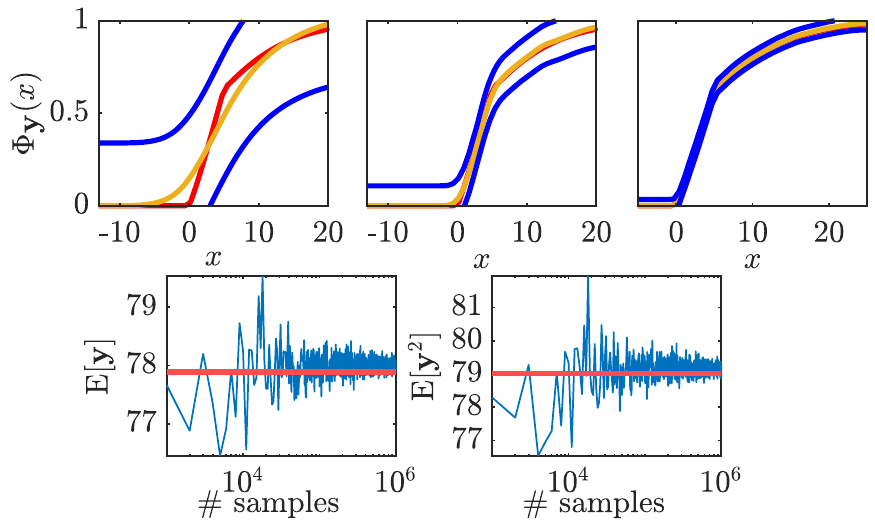}
    \caption{(Top) Approximation $\hat\Phi_{\mathbf{y}}(x)$ (yellow) of $\Phi_{\mathbf{y}}(x)$ (red) with 80\% confidence interval bands (blue) for 10, 100, and 1000 samples. 
    (Bottom) Convergence of $\mathbb{E}[\mathbf{y}]$ and $\mathbb{E}[\mathbf{y}^2]$.
    We presume $\mathbf{y}=\mathbf{f}\mathbf{y}_1+(1-\mathbf{f})\mathbf{y}_2$ for a Bernoulli random variable $\mathbf{f}$, with $\mathbf{y}_1$, $\mathbf{y}_2$, drawn from a gamma distribution $\mathrm{Gam}(k = 2,\theta = 5)$, and 
    a uniform distribution $\mathrm{Unif}[0,5]$, respectively.}
    \label{fig:convergence}
\end{figure}

While~\eqref{prob:stoc_ecf_cr_convex} is convex and conic, its relationship to~\eqref{prob:stoc} is not clear, as it utilizes an under-approximation of the {\em approximate} cumulative distribution function, 
$\hat\Phi_{p_i^{\top}\overline{G}\overline{\mathbf{w}}}(x)$ and approximate moments of $\overline{\mathbf{w}}$.
We first establish asymptotic convergence, then construct confidence intervals to describe a relationship to (\ref{prob:stoc}).
\begin{thm}
If $\hat\upvarphi_{\overline{\mathbf{w}}}(t)$ converges in probability to $\upvarphi_{\overline{\mathbf{w}}}(t)$ as $N_s \rightarrow \infty$, every feasible solution of \eqref{prob:stoc_ecf_cr_convex} is feasible for \eqref{prob:stoc}. 
\end{thm}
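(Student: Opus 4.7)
The plan is to verify each constraint of \eqref{prob:stoc} for an arbitrary feasible point $(\overline{u},\overline{\delta})$ of \eqref{prob:stoc_ecf_cr_convex}, in the limit $N_s\to\infty$. The input constraint $\overline{u}\in\mathcal{U}^{N}$ is identical in the two problems, so only the joint chance constraint \eqref{eq:gensconstr} needs attention. Throughout, write $\xi_i = q_i - p_i^{\top}(\overline{A}x_0+\overline{B}\overline{u})$ for notational brevity.

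The first step is a decomposition. Using the half-space representation $\mathcal S=\{P\overline{\mathbf{x}}\le q\}$ and Boole's inequality,
\begin{equation*}
\mathbb{P}\{\overline{\mathbf{x}}\notin\mathcal S\}\le \sum_{i=1}^{l}\bigl(1-\Phi_{p_i^{\top}\overline{G}\overline{\mathbf{w}}}(\xi_i)\bigr),
\end{equation*}
so it suffices to prove $\Phi_{p_i^{\top}\overline{G}\overline{\mathbf{w}}}(\xi_i)\ge 1-\delta_i$ for each $i$; summing and invoking \eqref{eq:stoc_ecf_cr_convex_ra} then yields $\mathbb{P}\{\overline{\mathbf{x}}\notin\mathcal S\}\le\Delta$.

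The second step uses the construction of Algorithm~\ref{algo:PWAunder}. Feasibility of \eqref{eq:stoc_ecf_cr_convex_ICC} for all $r\in\mathbb{N}_{[1,z^i]}$ is, by the pointwise minimum in \eqref{eq:PWACDF}, exactly $\hat\Phi^l_{p_i^{\top}\overline{G}\overline{\mathbf{w}}}(\xi_i)\ge 1-\delta_i$, while \eqref{eq:stoc_ecf_cr_convex_ICC_res} guarantees $\xi_i\in\mathcal{D}_i$ where the left inequality of \eqref{eq:approxineq} applies, so $\hat\Phi_{p_i^{\top}\overline{G}\overline{\mathbf{w}}}(\xi_i)\ge \hat\Phi^l_{p_i^{\top}\overline{G}\overline{\mathbf{w}}}(\xi_i)\ge 1-\delta_i$.

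The third step, which is the main obstacle, promotes this inequality from $\hat\Phi$ to $\Phi$ as $N_s\to\infty$. Since $p_i^{\top}\overline{G}\overline{\mathbf{w}}$ is a linear functional of $\overline{\mathbf{w}}$, convergence in probability of $\hat\upvarphi_{\overline{\mathbf{w}}}(t)$ to $\upvarphi_{\overline{\mathbf{w}}}(t)$ transfers directly to $\hat\upvarphi_{p_i^{\top}\overline{G}\overline{\mathbf{w}}}(t)\to\upvarphi_{p_i^{\top}\overline{G}\overline{\mathbf{w}}}(t)$. I then invoke L\'evy's continuity theorem, together with the smoothing kernel in \eqref{eq:ecfsmoothing}, to conclude that the approximate CDF obtained by applying \eqref{eq:cftocdf} to $\hat\upvarphi$ converges pointwise (at continuity points) to $\Phi_{p_i^{\top}\overline{G}\overline{\mathbf{w}}}$. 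The delicate piece is justifying the interchange of the limit with the Gil-Pel\'aez integral \eqref{eq:cftocdf}: the integrand has a removable singularity at $t=0$ (via the imaginary part of $\upvarphi$) and decays at infinity only through the Gaussian kernel $\exp(-\tfrac12 t^{\top}\overline\Sigma t)$; the kernel parameter $\overline\Sigma$ is fixed in $N_s$, which gives a uniform integrable envelope and permits a dominated-convergence argument.

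Passing to the limit in the chained inequality of the second step yields $\Phi_{p_i^{\top}\overline{G}\overline{\mathbf{w}}}(\xi_i)\ge 1-\delta_i$, and summing via the union bound of the first step together with \eqref{eq:stoc_ecf_cr_convex_ra} produces $\mathbb{P}\{\overline{\mathbf{x}}\notin\mathcal S\}\le\Delta$, completing the verification that $(\overline u,\overline\delta)$ is feasible for \eqref{prob:stoc}.
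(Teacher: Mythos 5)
Your proof is correct, and its core step is the same as the paper's: convergence of the empirical characteristic function forces the approximate cumulative distribution function to converge to the true one, so constraints verified against $\hat\Phi$ transfer to $\Phi$ in the limit. The difference is one of completeness and emphasis. The paper's proof is a three-line sketch: it cites the convergence of $\hat\upvarphi_{\overline{\mathbf{w}}}$ from Feuerverger and Mureika, invokes the Portmanteau theorem for convergence of the distribution function, and adds a moment-convergence claim --- the last of which concerns the cost \eqref{eq:stoc_ecf_cr_convex_cost} and is irrelevant to \emph{feasibility}, which, as you correctly note, depends only on the constraints. The paper never actually traces the implication from \eqref{eq:stoc_ecf_cr_convex_ICC}--\eqref{eq:stoc_ecf_cr_convex_ra} back to \eqref{eq:gensconstr}; you supply exactly that missing bookkeeping: the union-bound decomposition of the joint chance constraint, the observation that feasibility of \eqref{eq:stoc_ecf_cr_convex_ICC} for all $r$ is equivalent to $\hat\Phi^l_{p_i^{\top}\overline{G}\overline{\mathbf{w}}}(\xi_i)\ge 1-\delta_i$ by the pointwise-minimum structure of \eqref{eq:PWACDF}, and the role of \eqref{eq:stoc_ecf_cr_convex_ICC_res} in placing $\xi_i = q_i-p_i^{\top}(\overline{A}x_0+\overline{B}\overline{u})$ inside $\mathcal{D}_i$ where \eqref{eq:approxineq} applies. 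Your replacement of the bare Portmanteau citation with L\'evy continuity plus a dominated-convergence justification of the Gil-Pel\'aez integral (using the fixed Gaussian kernel as the integrable envelope) is sound and more honest about where the analytic work lies. One caveat applies to both proofs: the feasible set of \eqref{prob:stoc_ecf_cr_convex} itself depends on $N_s$ through $a_{i,r}$, $c_{i,r}$, and $x_i^{lb}$, so ``passing to the limit'' for a fixed feasible point is only an asymptotic statement; neither argument quantifies the finite-sample gap, which is precisely why the paper follows with Theorem~\ref{thm:confECF} and Corollary~\ref{corr:underapprox}.
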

\begin{proof}
By~\cite[Thm 2.1]{feuerverger_empirical_1977} $\hat\upvarphi_{\overline{\mathbf{w}}}(t)$ converges to $\upvarphi_{\overline{\mathbf{w}}}(t)$ as $N_s \rightarrow \infty$.
By the Portmanteau theorem, the cumulative distribution function converges~\cite[Thm. 2.1]{billingsley2013convergence}. 
For $\hat\upvarphi_{\overline{\mathbf{w}}}(t)$ 
that is 
differentiable at zero, then by~\eqref{eq:momentecfwkern}, the moments converge~\cite[Thm. 2.3.2]{lukacs_characteristic_1970}. 
\end{proof}
\begin{rem}
The ECF converges at a rate $\sqrt{N_s}$~\cite[Sec. 3]{yu_empirical_2004}.
\end{rem}

Asymptotic convergence establishes the relationship between our convex formulation and the original problem, but it is not practical in order to solve the reformulation quickly nor does it guarantee that~\eqref{eq:stoc_ecf_cr_convex_ICC} is an  under-approximation. 
We provide confidence intervals on the cumulative distribution function, a worst-case under-approximation. 
\begin{defn}[Dvoretzky–Kiefer–Wolfowitz Inequality~\cite{massart1990tight}]\label{def:DKW}
Given an empirical cumulative distribution function, $\hat\Phi^E_{p_i^{\top}\overline{G}\overline{\mathbf{w}}}(x)$, from $N_s$ samples, the probability that the worst deviation is above some $\epsilon_E$ is 
\begin{align}\label{eq:DKW}
    \mathbb{P}\left\{\underset{x\in\mathbb{R}}{\sup}\left(|\hat\Phi^E_{p_i^{\top}\overline{G}\overline{\mathbf{w}}}(x)-\Phi_{p_i^{\top}\overline{G}\overline{\mathbf{w}}}(x)|>\epsilon_E\right)\right\} & \leq \alpha 
\end{align}
for $\alpha = 2e^{-2N_s\epsilon_E^2}$.
\end{defn}
Hence for a desired confidence level $\alpha$, using $N_s$ samples, we have 
$\epsilon_E = ((2N_s)^{-1}\ln{(2/\alpha)})^{1/2}$.  
To make use of (\ref{eq:DKW}) for $\hat \Phi$, we make the following assumption.
\begin{assum}\label{assump:smoothing}
    For $x \in\mathcal{D}_i$, $|\hat\Phi^E_{p_i^{\top}\overline{G}\overline{\mathbf{w}}}(x)-\hat\Phi_{p_i^{\top}\overline{G}\overline{\mathbf{w}}}(x)|\leq \epsilon_D$.
\end{assum}
Assumption~\ref{assump:smoothing} is dependent upon $\overline \Sigma$ and $N_s$, and reasonable for $\Sigma$ chosen to avoid under- or over-smoothing.  Both terms converge to $\Phi_{p_i^{\top}\overline{G}\overline{\mathbf{w}}}(x)$ as $N_s\rightarrow\infty$, so their difference tends to zero \cite[Thm. 20.6]{billingsley_probability_2012}. 

\begin{thm}[Confidence Interval for $\hat\Phi_{p_i^{\top}\overline{G}\overline{\mathbf{w}}}(x)$]\label{thm:confECF}
Given Def.~\ref{def:DKW} and Assumption~\ref{assump:smoothing}, we have that with probability $1 - \alpha$, 
\begin{equation}\label{eq:confECF}
    |\hat\Phi_{p_i^{\top}\overline{G}\overline{\mathbf{w}}}(x)-\Phi_{p_i^{\top}\overline{G}\overline{\mathbf{w}}}(x)|\leq\epsilon_E + \epsilon_D
\end{equation}
\end{thm}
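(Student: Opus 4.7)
The plan is to combine the DKW inequality in Definition~\ref{def:DKW} with Assumption~\ref{assump:smoothing} via a triangle inequality on the three objects $\Phi$, $\hat\Phi^E$, and $\hat\Phi$. The key observation is that the theorem's bound separates cleanly: one part of $\epsilon_E + \epsilon_D$ is random (coming from the empirical cumulative distribution function deviating from the true one) and the other is deterministic (coming from the smoothed approximation of the empirical cumulative distribution function).

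First I would introduce the empirical cumulative distribution function $\hat\Phi^E_{p_i^{\top}\overline{G}\overline{\mathbf{w}}}(x)$ as an intermediate object and write
\begin{equation*}
|\hat\Phi_{p_i^{\top}\overline{G}\overline{\mathbf{w}}}(x)-\Phi_{p_i^{\top}\overline{G}\overline{\mathbf{w}}}(x)| \leq |\hat\Phi_{p_i^{\top}\overline{G}\overline{\mathbf{w}}}(x)-\hat\Phi^E_{p_i^{\top}\overline{G}\overline{\mathbf{w}}}(x)| + |\hat\Phi^E_{p_i^{\top}\overline{G}\overline{\mathbf{w}}}(x)-\Phi_{p_i^{\top}\overline{G}\overline{\mathbf{w}}}(x)|
\end{equation*}
via the triangle inequality. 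Next I would invoke Assumption~\ref{assump:smoothing} to bound the first term pointwise on $\mathcal{D}_i$ by $\epsilon_D$, which holds deterministically and hence with probability one.

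For the second term, I would apply the DKW inequality from Definition~\ref{def:DKW}: with probability at least $1-\alpha$, the supremum over $x$ of $|\hat\Phi^E_{p_i^{\top}\overline{G}\overline{\mathbf{w}}}(x)-\Phi_{p_i^{\top}\overline{G}\overline{\mathbf{w}}}(x)|$ does not exceed $\epsilon_E$, where $\alpha = 2e^{-2N_s\epsilon_E^2}$. In particular, the pointwise bound holds for every $x$, and in particular for every $x\in\mathcal{D}_i$, on this probability-$(1-\alpha)$ event. Combining the two bounds on that same event then yields the claim \eqref{eq:confECF}.

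The proof itself is essentially a one-line triangle inequality, so there is no real obstacle in the argument. The only subtlety worth flagging in writing is to make explicit that the event on which the DKW bound is asserted is independent of the pointwise, deterministic bound from Assumption~\ref{assump:smoothing}, so that the overall failure probability remains $\alpha$ rather than requiring a union bound. I would close by noting that the bound holds uniformly on the restricted domain $\mathcal{D}_i$, which is precisely what is needed later when using \eqref{eq:confECF} to certify the under-approximation in \eqref{eq:stoc_ecf_cr_convex_ICC}.
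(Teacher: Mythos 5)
Your proposal is correct and follows essentially the same route as the paper: the paper's proof unpacks the absolute-value bounds from Definition~\ref{def:DKW} and Assumption~\ref{assump:smoothing} into two-sided sandwich inequalities, \eqref{eq:proofDKW} and \eqref{eq:proofAssump}, and chains them on $\mathcal{D}_i$, which is exactly your triangle-inequality decomposition through the intermediate object $\hat\Phi^E_{p_i^{\top}\overline{G}\overline{\mathbf{w}}}(x)$ written in a different notation. Your added remark that the deterministic bound from Assumption~\ref{assump:smoothing} does not cost additional probability is a useful clarification the paper leaves implicit.
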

\begin{proof}
For $x \in \mathcal{D}_i$,
by Def.~\ref{def:DKW} and by the least upper bound property~\cite[Def. 5.5.5]{tao2006analysis}, we have that
$        |\hat\Phi^E_{p_i^{\top}\overline{G}\overline{\mathbf{w}}}(x)-\Phi_{p_i^{\top}\overline{G}\overline{\mathbf{w}}}(x)|\leq\epsilon_E
$
is satisfied with probability $1-\alpha$. 
By the properties of absolute value~\cite[Prop. 4.3.3]{tao2006analysis}, 
\begin{equation}\label{eq:proofDKW}
    \hat\Phi^E_{p_i^{\top}\overline{G}\overline{\mathbf{w}}}(x)-\epsilon_E\leq\Phi_{p_i^{\top}\overline{G}\overline{\mathbf{w}}}(x)\leq\hat\Phi^E_{p_i^{\top}\overline{G}\overline{\mathbf{w}}}(x)+\epsilon_E
\end{equation}
By Assumption~\ref{assump:smoothing} and the properties of absolute value, 
\begin{equation}\label{eq:proofAssump}
    \hat\Phi_{p_i^{\top}\overline{G}\overline{\mathbf{w}}}(x)-\epsilon_D\leq\hat\Phi^E_{p_i^{\top}\overline{G}\overline{\mathbf{w}}}(x)\leq\hat\Phi_{p_i^{\top}\overline{G}\overline{\mathbf{w}}}(x)+\epsilon_D
\end{equation}
Since $\hat\Phi_{p_i^{\top}\overline{G}\overline{\mathbf{w}}}(x)$, $\hat\Phi^E_{p_i^{\top}\overline{G}\overline{\mathbf{w}}}(x)$, and $\Phi_{p_i^{\top}\overline{G}\overline{\mathbf{w}}}(x)$ are positive, bounded, right-hand continuous functions~\cite{billingsley_probability_2012},  
we combine~\eqref{eq:proofDKW} and~\eqref{eq:proofAssump}, 
so that
 $   \hat\Phi_{p_i^{\top}\overline{G}\overline{\mathbf{w}}}(x)-\epsilon_E-\epsilon_D\leq\Phi_{p_i^{\top}\overline{G}\overline{\mathbf{w}}}(x)\leq\hat\Phi_{p_i^{\top}\overline{G}\overline{\mathbf{w}}}(x)+\epsilon_E+\epsilon_D
$.  
Thus, we have~\eqref{eq:confECF} by the properties of absolute value. 
\end{proof}
\begin{corr}\label{corr:underapprox}
Given $\hat\Phi^l_{p_i^{\top}\overline{G}\overline{\mathbf{w}}}(x)$, which under-approximates $\hat \Phi_{p_i^{\top}\overline{G}\overline{\mathbf{w}}}(x)$  according to (\ref{eq:approxineq}) on $\mathcal{D}_i$, and the confidence interval $\epsilon_D + \epsilon_E$ in (\ref{eq:confECF}) with likelihood $1-\alpha$, we have
$    \hat\Phi^l_{p_i^{\top}\overline{G}\overline{\mathbf{w}}}(x)-\epsilon-\epsilon_E-\epsilon_D\leq\Phi_{p_i^{\top}\overline{G}\overline{\mathbf{w}}}(x)
$
with likelihood $1-\alpha$.
\end{corr}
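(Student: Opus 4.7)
The plan is to chain together the two bounds already in hand: the piecewise-affine under-approximation error from~\eqref{eq:approxineq} and the confidence bound from Theorem~\ref{thm:confECF}. First, I would unpack~\eqref{eq:approxineq}, which holds deterministically on $\mathcal{D}_i$, to obtain the two-sided relation $\hat\Phi^l_{p_i^{\top}\overline{G}\overline{\mathbf{w}}}(x) \leq \hat\Phi_{p_i^{\top}\overline{G}\overline{\mathbf{w}}}(x) \leq \hat\Phi^l_{p_i^{\top}\overline{G}\overline{\mathbf{w}}}(x) + \epsilon$. The upper bound here is what lets me replace $\hat\Phi$ by $\hat\Phi^l$ at the cost of an extra $\epsilon$ term, and it is the lower bound that propagates through to the statement of interest.

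Next, I would invoke Theorem~\ref{thm:confECF} to convert the absolute-value bound on $|\hat\Phi - \Phi|$ into a one-sided lower bound on $\Phi_{p_i^{\top}\overline{G}\overline{\mathbf{w}}}(x)$. Exactly as in the proof of that theorem, the properties of absolute value give $\hat\Phi_{p_i^{\top}\overline{G}\overline{\mathbf{w}}}(x) - \epsilon_E - \epsilon_D \leq \Phi_{p_i^{\top}\overline{G}\overline{\mathbf{w}}}(x)$ with probability at least $1 - \alpha$. This is the only probabilistic step, so all remaining manipulations preserve the $1-\alpha$ likelihood.

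Finally, I would substitute the piecewise-affine relation into the confidence bound. Using $\hat\Phi \geq \hat\Phi^l$ (equivalently, $\hat\Phi \geq \hat\Phi^l + 0$ or the weaker $\hat\Phi \geq \hat\Phi^l - \epsilon + \epsilon$), one immediately obtains
\begin{equation*}
    \hat\Phi^l_{p_i^{\top}\overline{G}\overline{\mathbf{w}}}(x) - \epsilon - \epsilon_E - \epsilon_D \;\leq\; \hat\Phi_{p_i^{\top}\overline{G}\overline{\mathbf{w}}}(x) - \epsilon_E - \epsilon_D \;\leq\; \Phi_{p_i^{\top}\overline{G}\overline{\mathbf{w}}}(x),
\end{equation*}
with the first inequality following from~\eqref{eq:approxineq} (which in fact gives the tighter bound without the $-\epsilon$, so the stated inequality holds a fortiori), and the second from Theorem~\ref{thm:confECF} with confidence $1-\alpha$. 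The inclusion of the $-\epsilon$ term in the statement gives a uniformly conservative bound that absorbs both sources of error symmetrically and will be convenient for downstream probabilistic guarantees on the feasibility of~\eqref{prob:stoc_ecf_cr_convex}.

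There is no real obstacle here beyond careful bookkeeping of inequality directions and domains. The only subtlety is ensuring that all three evaluations $\hat\Phi^l$, $\hat\Phi$, and $\Phi$ are taken at the same $x \in \mathcal{D}_i$, since~\eqref{eq:approxineq} is only asserted on $\mathcal{D}_i$; I would state this restriction explicitly at the start of the proof to avoid ambiguity.
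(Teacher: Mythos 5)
Your proposal is correct and follows exactly the route the paper takes, which simply chains the deterministic under-approximation bound \eqref{eq:approxineq} with the probabilistic confidence bound \eqref{eq:confECF}; the paper's own proof is the one-line ``follows directly from \eqref{eq:approxineq} and \eqref{eq:confECF},'' and your write-up just fills in that chaining explicitly. Your side observation that $\hat\Phi \geq \hat\Phi^l$ already makes the $-\epsilon$ term superfluous for this one-sided bound is also accurate --- the stated inequality is deliberately conservative and holds a fortiori.
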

\begin{proof}
Follows directly from \eqref{eq:approxineq} and~\eqref{eq:confECF}. 
\end{proof}

Corollary \ref{corr:underapprox} establishes a worst-case under-approximation to the true cumulative distribution function.
 A similar approach can be taken 
 for $\mathbb{E}[\overline{\mathbf{w}}]$ and $\mathbb{E}[\overline{\mathbf{w}}^2]$, using results from \cite{anderson1969confidence} and \cite{romano2002explicit}, respectively.    
However, because the approximate moments are cheap to compute (i.e., 3.22 seconds for $10^6$ samples), numerical approximations can be quite accurate (Figure~\ref{fig:convergence}).  In contrast, the computational cost of sampling is high for the chance constraint under-approximation. 

Algorithm 2 and the optimization reformulation (\ref{prob:stoc_ecf_cr_convex}), along with convergence results and confidence intervals in this section, solve Problem~\ref{ps:optimal_hp}. 

\section{Examples}\label{sec:numerical}%
We demonstrate our approach on two examples.
We presume $N_s = 1000$, $N_p=1000$, $\epsilon = 1\times10^{-3}$, $N_{dr} = 20$, and $\Delta = 0.2$. 
In each case, we compare our method to a mixed-integer particle control approach~\cite{blackmore_probabilistic_2010}, which uses disturbance samples (we chose 50) to compute an open-loop controller.  To do so, we used Monte-Carlo simulation with $10^5$ disturbance sequences.
All computations were done in MATLAB with a 3.80GHz Xeon processor and 32GB of RAM. 
The optimization problems were formulated in CVX~\cite{cvx} and solved with Gurobi~\cite{gurobi}. 
The inversion \eqref{eq:cftocdf} uses CharFunTool \cite{witkovsky_numerical_2016} and system formulations are implemented in SReachTools~\cite{sreachtools}. 
We use \cite{botev2010kernel}, which employs linear diffusion and a plug-in method, to compute $\overline{\Sigma}$. 

\subsection{Double Integrator}

\label{sec:dint}

\begin{figure}[ht!]
    \centering
    \includegraphics[width=\linewidth]{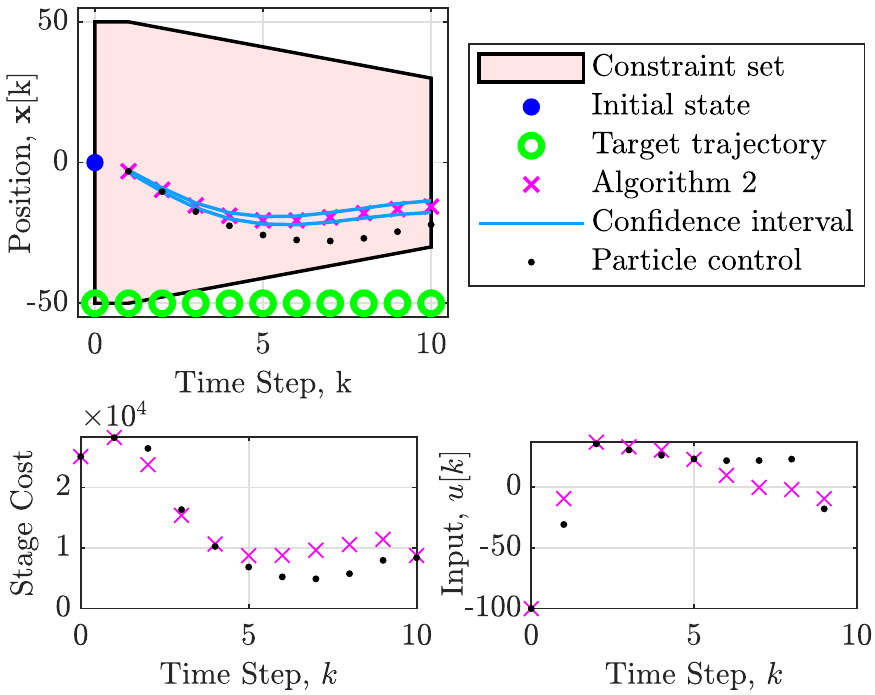}
    \caption{(Top) Mean trajectories for the double integrator. 
     Algorithm~\ref{algo:solvingConvex} satisfies the desired constraint satisfaction likelihood, while particle  control \cite{blackmore_probabilistic_2010} does not. 
    The reference trajectory is chosen to test constraint violation.
    (Bottom) Mean stage cost and control input. Algorithm~\ref{algo:solvingConvex} has higher stage cost due to constraint satisfaction.
    }
    \label{fig:DI}
\end{figure}

Consider a double integrator 
\begin{equation}
    \mathbf{x}[k+1] = 
    \begin{bmatrix}
    1 & \Delta T\\
    0 & 1\\
    \end{bmatrix}\mathbf{x}[k]+
    \begin{bmatrix}
    \frac{\Delta T^2}{2}\\
    \Delta T
    \end{bmatrix}u[k] + \mathbf{w}[k]
\end{equation}
with state $\mathbf{x} \in\mathbb{R}^2$, disturbance $\mathbf{w}\in\mathbb{R}^2$, input $u\in\mathcal{U}=[-100,100]\subset\mathbb{R}$, sampling time $\Delta T= 0.25$, and time horizon $N = 10$. 
Disturbance samples are drawn independently for each dimension, 
from a uniform distribution $\mathrm{Unif}[-5,5]$ on $\mathbf{w}_1$, and from a scaled gamma distribution $0.005\cdot \text{Gam}(k=8,\theta=0.5)$ on $\mathbf{w}_2$.
The cost function has $Q= 10I_{22\times 22}$, $R =
10^{-2} I_{10\times 10}$. 
The time-varying constraint set is $   \mathcal{S} = \left\{ t\in \mathbb{N}_{[0,N]}\times
    \mathbb{R}^2 : p_1 t + q_1 \leq \mathbf{x}_1 \leq p_2 t + q_2\right\}$
with $p_1=-p_2 = -2$, $q_1=-q_2 = -50$. 
The reference trajectory, $x_d = [50\ 0]^{\top}$, was chosen intentionally to be outside of the constraint set, to test constraint violation.

\begin{table}

    \centering
    \caption{Empirical evaluation of the constraint satisfaction likelihood 
    and mean computation time, based on $10^5$ samples. 
    }
    \label{tb:linearmodelresults}
    \begin{tabular}{|l||c|c|c|c|}
    \cline{2-5}
    \multicolumn{1}{c}{} & \multicolumn{2}{|c|}{Algorithm~\ref{algo:solvingConvex}} &\multicolumn{2}{|c|}{Particle Control} \\
    \hline
    Example &
    $1-\Delta$ & Time (s) &
    $1-\Delta$ & Time (s)\\\hline\hline
    Double Integrator & 0.912 & 2.502 & 0.697 & 144.6\\ \hline
    Hypersonic Vehicle  & 0.889 & 5.395 & 0.639 & 31.563\\ \hline
    
    \end{tabular}
\end{table}
While the mean state trajectories from Algorithm~\ref{algo:solvingConvex} and from particle control are similar (Figure~\ref{fig:DI}), the stage cost, i.e. the cost 
at each time, and the control trajectories 
differ. 
Algorithm \ref{algo:solvingConvex} exceeds the constraint satisfaction likelihood of 0.8, while particle control falls well below (Table \ref{tb:linearmodelresults}).  
This is due to the fact that Algorithm~\ref{algo:solvingConvex} is based on 1000 disturbance samples, while particle control is based on only 50 (from inherent undersampling due to computational cost).  The higher cost for Algorithm 2 is incurred because of constraint satisfaction.  

\subsection{One-way Hypersonic Vehicle}
Consider a hypersonic vehicle with longitudinal dynamics
\begin{equation}
\label{eq:hyperdyn}
 \begin{array}{rcl}
    \dot h &= &V\sin(\theta-\alpha)\\
    \dot V &= &\frac{1}{m}(T(\Psi,\alpha)\cos \alpha -D(\alpha,\delta_e))-g\sin(\theta-\alpha)\\
    \dot\alpha &= &\frac{1}{mV}(-T(\Psi,\alpha)\sin \alpha -L)+Q+\frac{g}{V}\cos(\theta-\alpha)\\
    \dot\theta &= &Q\\
    \dot Q &=& M(\alpha,\delta_e,\Psi)/I_{yy}
\end{array}   
\end{equation}
with state $\mathbf{x} = [h\ V \ \alpha \ \theta \ Q]^{\top}$ and input $u = [\Psi\ \delta_e]^{\top}$, that includes fuel-to-air ratio
$\Psi$ and elevator deflection $\delta_e$ \cite{parker2007control}. 
We linearize (\ref{eq:hyperdyn}) about the trim condition, $x_d = [85000 \mbox{ ft}, 7702 \mbox{ ft/s}, 0.026\ \mathrm{rad}, 0.026\ \mathrm{rad}, 0\ \mathrm{rad}]$, which is also the reference trajectory, and $u_d = [0.25, 0.2\ \mathrm{rad}]$, and add a disturbance $\mathbf{w} \in \mathbb R^2$, which affects $\dot h$ and $\dot V$ only, with
$\mathbf{w}_1$, $\mathbf{w}_2$ drawn from a scaled Weibull distribution, $2\cdot \text{Weib}(k=5,\theta=4)$, and 
a gamma distribution, $\text{Gam}(k=5,\theta=1)$, respectively.  We discretize in time with $\Delta T = 0.25$,  $N=10$.  The cost function has 
$Q= 10I_{55\times 55}$ and $R =10^{-2}I_{20 \times 20}$.
The constraint set, $
    \mathcal{S} = \{ t\in \mathbb{N}_{[0,N]}\times
    \mathbb{R}^5 : h\in[85000 \mbox{ ft},85200 \mbox{ ft}],
    V\in[7650 \mbox{ ft/s},7750 \mbox{ ft/s}] \}$, and input constraints $\Psi\in[0.2,1.2]$ $\delta_e\in[-0.26\ \mathrm{rad},0.26\ \mathrm{rad}]$
arise from 
the flight envelope and the operational mode
\cite{hicks1993flight,dalle2011flight,fiorentini2009nonlinear}.
\begin{figure}
    \centering
    \includegraphics[width=\linewidth]{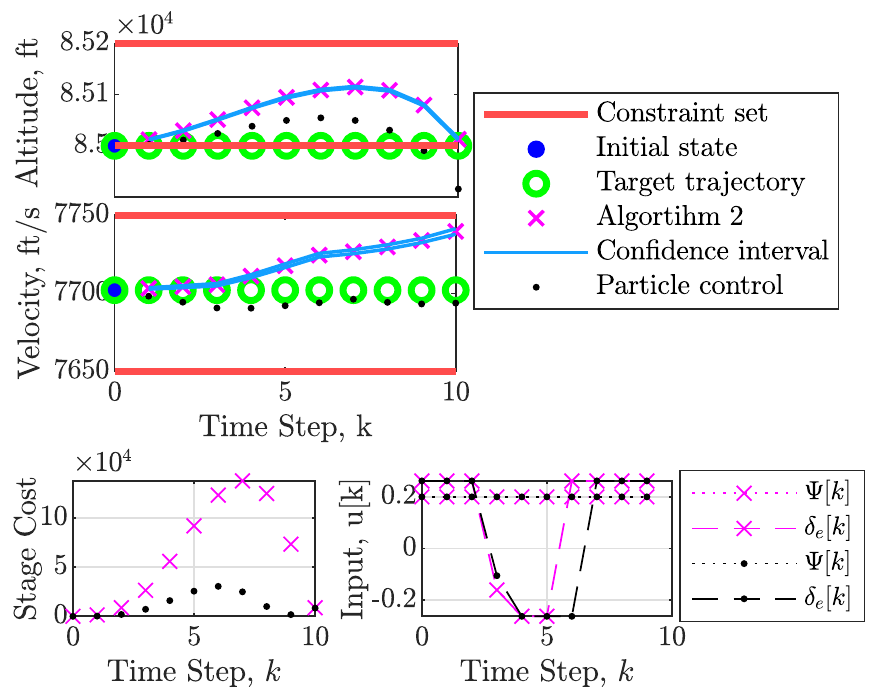}
    \caption{(Top) Mean trajectories for the hypersonic vehicle.  Constraint satisfaction is above the desired likelihood with Algorithm 2, but not with particle control \cite{blackmore_probabilistic_2010}. 
    (Bottom) Mean stage cost and input. 
    The particle control cost is low because constraints are not satisfied.}
    \label{fig:hyp}
\end{figure}


Comparing Algorithm~\ref{algo:solvingConvex} to the particle filter approach, 
mean trajectories (Figure~\ref{fig:hyp}) 
show a similar trend as in Section \ref{sec:dint}.  While constraints are satisfied under Algorithm~\ref{algo:solvingConvex} with at least the desired likelihood, 
particle control violates the altitude constraint, and is excessively conservative with respect to the speed constraint.
The constraint satisfaction likelihood is 0.889 for Algorithm~\ref{algo:solvingConvex}, but only 0.639 for particle control (Table~\ref{tb:linearmodelresults}).


\section{Acknowledgements}

We thank Maria Cristina Pereyra 
and Abraham Vinod for their feedback and discussions.

\bibliographystyle{IEEEtran}
\bibliography{IEEEabrv,shortIEEE,refs}
\end{document}